\theoremstyle{plain}
\newtheorem{theorem}{Theorem}[section]
\newtheorem{lemma}[theorem]{Lemma}
\newtheorem{corollary}[theorem]{Corollary}
\newtheorem{proposition}[theorem]{Proposition}
\newtheorem{conjecture}[theorem]{Conjecture}
\newtheorem*{theorem*}{Theorem}
\newtheorem*{problem*}{Problem}
\theoremstyle{definition}
\newtheorem{definition}[theorem]{Definition}
\newtheorem{remark}[theorem]{Remark}
\numberwithin{equation}{section}
\renewcommand{\O}{{\mathcal O}}
\newcommand{\Z}{{\mathbb Z}}
\newcommand{\R}{{\mathbb R}}
\newcommand{\gm}{\mathbb{G}_{m}}
\newcommand{\op}[1]{\operatorname{#1}}
\begin{document}

\title[The toric Frobenius morphism and a conjecture of Orlov]{The toric Frobenius morphism and a conjecture of Orlov}

\author[Ballard]{Matthew R Ballard}
\address{Department of Mathematics, University of South Carolina, 
Columbia, SC 29208}
\email{ballard@math.sc.edu}
\urladdr{\url{http://people.math.sc.edu/ballard/}}
\thanks{The first author was partially supported by NSF DMS-1501813. He would also like to thank the Institute for Advanced Study for providing a wonderful research environment. These ideas were developed during his membership.}

\author[Duncan]{Alexander Duncan}
\address{Department of Mathematics, University of South Carolina, 
Columbia, SC 29208}
\email{duncan@math.sc.edu}
\urladdr{\url{http://people.math.sc.edu/duncan/}}
\thanks{The second author was partially supported by NSA grant
H98230-16-1-0309.}

\author[McFaddin]{Patrick K. McFaddin}
\address{Department of Mathematics, University of South Carolina, 
Columbia, SC 29208}
\email{pkmcfaddin@gmail.com}
\urladdr{\url{http://mcfaddin.github.io/}}
\thanks{}

\begin{abstract}
We combine the Bondal-Uehara method for producing exceptional
collections on toric varieties
with a result of the first author and Favero to expand the
set of varieties satisfying Orlov's Conjecture on derived dimension. 
\end{abstract}

\maketitle
\addtocounter{section}{0}



\section{Introduction}



To solve a longstanding question originating with work of Auslander, Rouquier used a new invariant of a triangulated category $\mathsf T$ \cite{RouAus}. This invariant is a measure of the minimal homological complexity of the category $\mathsf T$. He focused particular attention on the case where $\mathsf T = \op{D}^b(\op{coh} X)$ for a scheme $X$ of finite type over a field $k$. In this case, he showed his invariant is always at least the Krull dimension of $X$ with equality in certain situations, like Grassmannians $\op{Gr}(r,n)$. Orlov then asked if Rouquier's invariant, henceforth known as the \textsf{Rouquier dimension}, is actually equal to the Krull dimension in the case $X$ is smooth and projective. He showed the answer was yes in the case of curves \cite{Orlov}. 

Despite being a simply stated question, Orlov's Conjecture seems difficult to address in general. Indeed, supporting evidence comes from individual constructions specialized to particular examples \cite{BF,BFK,Rou,Orlov}. For toric varieties, we give a more robust method based on an idea of Bondal \cite{Bondal} refined by Uehara \cite{Uehara} utilizing the toric Frobenius morphism. The main result asserts that if the Bondal-Uehara method produces a tilting bundle then Orlov's Conjecture holds. We close with applications that illustrate the potency of this simple idea. 

\section{Toric Frobenius and generation time} 


\subsection{Generation time and the Rouquier dimension} \label{section: background gen time}

Let $\mathsf{T}$ be a triangulated category. Recall that this guarantees that for any map $f: A \to B$ in $\mathsf{T}$ there is a triangle
\begin{displaymath}
 A \overset{f}{\to} B \to C(f) \to A[1].
\end{displaymath}
Generally, the assignment $f \mapsto C(f)$ is only well-defined up to an isomorphism of $C(f)$. Even so one commonly calls $C(f)$ the \textsf{cone} over $f$. In \cite{Rou}, Rouquier, building on work of \cite{BV}, introduced a notion of dimension of a triangle category. This notion measures the homological complexity of the category by, roughly, counting cones. Let us be a bit more precise.

\begin{definition} \label{definition: operations on tricat}
If $\mathsf{S}$ is a full subcategory of $\mathsf{T}$, let $\langle \mathsf{S} \rangle$ denote the full subcategory of $\mathsf{T}$ containing $\mathsf{S}$ and which is closed under finite coproducts, summands, and translations. 
 
 Let $\mathsf{S}_1$ and $\mathsf{S}_2$ be full subcategories of $\mathsf{T}$.
Let $\mathsf{S}_1 \ast \mathsf{S}_2$ be the full subcategory of $\mathsf{T}$ consisting of objects $A$ such that there exists a triangle
 \begin{displaymath}
  S_1 \to A \to S_2 \to S_1[1]
 \end{displaymath}
 with $S_i$ an object of $\mathsf{S}_i$.
Define $\mathsf{S}_1 \diamond \mathsf{S}_2 := \langle \mathsf{S}_1 \ast
\mathsf{S}_2 \rangle$.
 
 One inductively defines
 \begin{align*}
  \langle \mathsf{S} \rangle_0 & := \langle \mathsf{S} \rangle \\
  \langle \mathsf{S} \rangle_{n+1} & := \langle \mathsf{S} \rangle_n
\diamond \langle \mathsf{S} \rangle.
 \end{align*}
 
 One says that $\mathsf{S}$ \textsf{generates} $\mathsf{T}$ if any object of $\mathsf{T}$ is isomorphic to an object of $\langle \mathsf{S} \rangle_n$ for some $n$, possibly depending on the object. One says that $\mathsf{S}$ \textsf{strongly generates} $\mathsf{T}$ if there exists an $n$ such that the inclusion $\langle \mathsf{S} \rangle_n \to \mathsf{T}$ is an equivalence. One says that the \textsf{generation time} of $\mathsf{S}$ is the minimal $n$ such that $\langle \mathsf{S} \rangle_n \to \mathsf{T}$ is an equivalence.
 
 If $\mathsf{S}$ consists of a single object $S$, then one also says that $S$ (strongly) generates if $\mathsf{S}$ does. The generation time of the object $S$ is the generation time of $\mathsf{S}$.
\end{definition}

\begin{definition}
 The \textsf{Rouquier dimension} of $\mathsf{T}$ is
 \begin{displaymath}
  \op{rdim} \mathsf{T} := \op{min} \lbrace n \mid \exists \text{ an object } S \text{ so that } \langle S \rangle_n \cong \mathsf{T} \rbrace
 \end{displaymath}
 with the notation $\infty$ used if the set is empty. In other words,
the Rouquier dimension is the minimal generation time among any of the
objects. For a $k$-scheme $X$, we denote $\op{rdim} \text{D}^b (\op{coh} X)$ by $\op{rdim } X$.
\end{definition}

In \cite{Orlov}, Orlov made the following conjecture.

\begin{conjecture} \label{conjecture: Orlov}
 Let $X$ be a smooth quasiprojective variety. Then the Rouquier dimension and the Krull dimension coincide, i.e.,
 \begin{displaymath}
  \op{rdim} X = \op{dim} X.
 \end{displaymath}
\end{conjecture}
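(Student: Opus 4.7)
The plan proceeds by separating the two inequalities. The bound $\op{rdim} X \geq \op{dim} X$ is already known: Rouquier's theorem (mentioned in the introduction) gives it for any finite type $k$-scheme. So the real content is the upper bound, and the task reduces to exhibiting, for each smooth quasiprojective $X$, an object $G \in \op{D}^b(\op{coh} X)$ which generates in at most $\op{dim} X$ cone-steps.

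First I would attempt the classical ``resolution of the diagonal'' strategy: realize the structure sheaf of $\Delta \subset X \times X$ as an iterated cone built in $\op{dim} X$ stages from external tensor products of objects on $X$. Fed through the Fourier-Mukai kernel, such a resolution forces every object of $\op{D}^b(\op{coh} X)$ to be built in $\op{dim} X$ stages from a single fixed summand, realizing the conjectural bound. Beilinson's classical resolution handles $\mathbb{P}^n$ this way, and the same method carries through whenever $X$ admits a suitably structured full exceptional collection.

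A more robust approach, which matches the framework the paper announces, is to hunt for a tilting object $T$ whose endomorphism algebra has finite global dimension equal to $\op{dim} X$. In that case the generation time of $T$ is controlled directly by the homological dimension of $\op{End}(T)$. For toric varieties there is a very natural candidate: the pushforward of $\O_X$ along the toric Frobenius (multiplication by $q$ on the character lattice) decomposes as a sum of line bundles read off from the fan, and Bondal-Uehara give sufficient conditions under which this pushforward is tilting for $q$ large. Coupled with the Ballard-Favero bound on the generation time of a tilting object, the problem becomes an essentially combinatorial check on the fan.

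The main obstacle is that Orlov's conjecture is genuinely open in full generality, and no uniform construction is known that produces a generator with generation time equal to $\op{dim} X$. Tilting objects cannot exist when $H^1(\O_X) \neq 0$, and resolutions of the diagonal of length $\op{dim} X$ are rare outside of highly symmetric situations such as homogeneous spaces. I would therefore not attack the conjecture head on; instead, I would restrict to classes where an explicit geometric mechanism is available, chief among them smooth projective toric varieties where the Frobenius machine applies, and verify the tilting hypothesis in as many cases as one can manage, which is precisely the programme the authors appear to carry out.
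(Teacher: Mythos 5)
This statement is Orlov's Conjecture itself, which the paper does not (and cannot) prove in general — it only verifies it for specific classes of varieties — and your proposal correctly recognizes this. Your fallback strategy (Rouquier's lower bound $\op{rdim} X \geq \op{dim} X$, plus an upper bound obtained by exhibiting a tilting bundle built from summands of the toric Frobenius pushforward and applying the Ballard--Favero bound on its generation time) is precisely the programme the paper carries out, yielding the conjecture for smooth toric Fano threefolds and fourfolds and their $K$-negative birational models.
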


Rouquier had already given the lower bound in \cite[Proposition 7.16]{Rou}. 

\begin{proposition} \label{proposition: lower bound}
 For a reduced separated scheme $X$ of finite type over a field, we have $\op{rdim} X \geq \op{dim} X$. 
\end{proposition}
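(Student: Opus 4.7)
The plan is to apply Rouquier's \emph{ghost lemma}: recall that a morphism $f\colon A\to B$ is $G$-\emph{ghost} if $\op{Hom}(G[i],f)=0$ for all $i\in\mathbb Z$, and the ghost lemma states that if $A\in\langle G\rangle_n$, then any composable chain of $n+1$ $G$-ghosts starting at $A$ composes to zero. Thus to prove $\op{rdim} X \geq d:=\dim X$, it suffices to exhibit, for every generator $G$ of $\op{D}^b(\op{coh} X)$, a test object $A$ admitting $d$ composable $G$-ghosts with nonzero composite.

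First I would select the test object. Since $X$ is reduced of finite type over $k$, its regular locus is dense and meets every top-dimensional irreducible component, so there is a closed point $x$ with $\O_{X,x}$ regular of Krull dimension $d$. Take $A = \kappa(x)$, the pushforward of the residue field at $x$. A regular system of parameters $t_1,\ldots,t_d\in\mathfrak m_x$ yields a length-$d$ Koszul resolution of $\kappa(x)$ over $\O_{X,x}$, and the resulting computation gives the graded-algebra isomorphism
\[
\op{Ext}^*_{\O_X}(\kappa(x),\kappa(x)) \;\cong\; \Lambda^*_{\kappa(x)}\bigl(\mathfrak m_x/\mathfrak m_x^2\bigr).
\]
In particular, the top self-Ext group $\op{Ext}^d(\kappa(x),\kappa(x))$ is one-dimensional over $\kappa(x)$, generated by the cup product $\xi_1\smile\cdots\smile\xi_d$ of the classes $\xi_i$ dual to the $t_i$.

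Second, given a generator $G$, I would build a chain of $G$-ghosts realizing this top class via the universal ghost construction: set $A_0 = \kappa(x)$, and inductively pick a morphism $\varphi_i\colon G_i\to A_i$ from a finite sum of shifts of $G$ that is surjective on $\op{Hom}^*(G,-)$, complete to a distinguished triangle $A_{i+1}\to G_i\to A_i\to A_{i+1}[1]$, and let $f_{i+1}$ be the connecting morphism (reindexed so it has source $A_{i+1}$). By construction each $f_{i+1}$ is $G$-ghost. A long-exact-sequence chase, combined with the Koszul computation, identifies the iterated composite $f_1\circ\cdots\circ f_d$ with a nonzero scalar multiple of the top class $\xi_1\smile\cdots\smile\xi_d$. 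The ghost lemma then forces $\kappa(x)\notin \langle G\rangle_{d-1}$; since $G$ was arbitrary, we conclude $\op{rdim} X \geq d$.

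The main obstacle is the identification in the last step: verifying that the composition of universal ghosts actually realizes the nonzero top Koszul class rather than being absorbed by successive cone formations. The crux is that this class lives in degree exactly $d$, matching the Krull dimension of $\O_{X,x}$, and this match is precisely what regularity of $x$ together with $\dim\O_{X,x}=\dim X$ supplies, providing an obstruction of the correct homological degree.
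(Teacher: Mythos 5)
The paper does not actually prove this proposition; it is quoted directly from Rouquier \cite[Proposition 7.16]{Rou}, and Rouquier's argument is essentially the strategy you outline (reduce to a regular point, compute $\op{Ext}^*(\kappa(x),\kappa(x))$ as an exterior algebra, and run a ghost-type argument against an arbitrary generator). So your plan is the right one, but as written it has two genuine gaps.

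First, the universal ghost construction requires each $G_i$ to be a \emph{finite} sum of shifts of $G$ with $\op{Hom}(G[j],G_i)\to\op{Hom}(G[j],A_i)$ surjective for \emph{all} $j$ at once; this needs $\bigoplus_j\op{Hom}(G,A_i[j])$ to be finitely generated as a graded module over $\op{End}^*(G)$. Your $A_0=\kappa(x)$ is harmless, but $A_1=\op{cone}(G_0\to A_0)[-1]$ is already a general object of $\op{D}^b(\op{coh}X)$, and on a singular $X$ (the proposition allows this) the groups $\op{Hom}(G,A_1[j])$ can be nonzero for infinitely many $j$, so the tower cannot be built as described. The standard repair, and Rouquier's first step, is to pick a regular affine open $U$ containing $x$ and use density of restriction (Lemma~\ref{lemma: essentially}) to reduce to $U$, where every object is perfect and the required finiteness holds.

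Second, and more seriously, the claimed identification of $f_1\circ\cdots\circ f_d$ with the top Koszul class does not typecheck: that composite is a morphism $\kappa(x)\to A_d[d]$ into an auxiliary object depending on $G$, not an element of $\op{Ext}^d(\kappa(x),\kappa(x))$. What the long-exact-sequence chase actually gives is that the image of precomposition $\op{Hom}(A_d[d],N)\to\op{Hom}(\kappa(x),N)$ is the $d$-th level of the $G$-ghost filtration; to conclude the composite is nonzero one must still produce a nonzero element of $\op{Ext}^*(\kappa(x),\kappa(x))$ lying in that level, i.e.\ factoring as a composite of $d$ $G$-ghosts. That is the real content of the proposition, and it is exactly the step you flag as "the main obstacle" without supplying it. The known arguments use that $\op{Ext}^*(\kappa(x),\kappa(x))$ is an exterior algebra generated in degree one with nonzero top product, together with the finiteness of $\op{Hom}^*(G,\kappa(x))$ as a graded module over it, and then peel off one degree-one class per cone (Rouquier's cohomological-functor lemma, or the Koszul-object induction of Bergh--Iyengar--Krause--Oppermann). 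Without some such mechanism the proof is incomplete at its crux.
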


Thanks to this result, verifying Orlov's Conjecture for a given $X$ amounts to finding a particular nice generator whose generation time is $\op{dim} X$. We will also need the following basic property of Rouquier dimension, which follows from \cite[Lemma 3.4]{Rou}. Recall that a functor is dense if every object is isomorphic to a summand of an object in the image. 

\begin{lemma} \label{lemma: essentially}
 Let $F : \mathsf{S} \to \mathsf{T}$ be a dense exact functor of triangulated categories. Then 
 \begin{displaymath}
  \op{rdim} \mathsf{S} \geq \op{rdim} \mathsf{T}.
 \end{displaymath}
\end{lemma}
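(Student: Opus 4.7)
The plan is to transport a generator of $\mathsf{S}$ realizing its Rouquier dimension across $F$ to a generator of $\mathsf{T}$ with the same generation time bound. So take an object $G$ of $\mathsf{S}$ with generation time $n := \op{rdim} \mathsf{S}$, so that $\langle G \rangle_n \cong \mathsf{S}$. The goal is to show $\langle F(G) \rangle_n \cong \mathsf{T}$, which will immediately yield $\op{rdim} \mathsf{T} \leq n$.

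The first step is a compatibility lemma: for every $k \geq 0$, one has $F(\langle G \rangle_k) \subseteq \langle F(G) \rangle_k$. I would prove this by induction on $k$. The base case $k=0$ follows because $F$ is additive and commutes with shifts (being exact), hence sends finite coproducts, summands, and translates of $G$ to the same kind of objects built from $F(G)$. For the inductive step, an object $A \in \langle G \rangle_{k+1} = \langle G \rangle_k \diamond \langle G \rangle$ is a summand of an object fitting into a triangle $S_1 \to A' \to S_2 \to S_1[1]$ with $S_1 \in \langle G \rangle_k$ and $S_2 \in \langle G \rangle$. Applying the exact functor $F$ produces a triangle in $\mathsf{T}$ witnessing that $F(A')$ lies in $\langle F(G) \rangle_k \ast \langle F(G) \rangle$, and closure under summands then places $F(A)$ in $\langle F(G) \rangle_{k+1}$.

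The second step invokes density. Given any object $T$ of $\mathsf{T}$, density provides $S \in \mathsf{S}$ and an object $T'$ with $T \oplus T' \cong F(S)$. Since $S \in \mathsf{S} = \langle G \rangle_n$, the previous step gives $F(S) \in \langle F(G) \rangle_n$, and the closure of $\langle F(G) \rangle_n$ under summands then yields $T \in \langle F(G) \rangle_n$. This shows $F(G)$ strongly generates $\mathsf{T}$ in time at most $n$, so $\op{rdim} \mathsf{T} \leq n = \op{rdim} \mathsf{S}$. If $\op{rdim} \mathsf{S} = \infty$ the inequality is trivial.

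The main obstacle is the inductive compatibility step, and more specifically the use of density: without it one could only conclude that the essential image of $F$ is generated in time $n$, not all of $\mathsf{T}$. Density is precisely what allows summands in $\mathsf{T}$ to be absorbed into the $\langle \cdot \rangle_n$ machine, which is closed under summands by definition. Everything else is a bookkeeping exercise in unwinding the definitions of $\ast$, $\diamond$, and $\langle \cdot \rangle_k$ against the axioms of an exact functor.
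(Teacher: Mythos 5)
Your proof is correct and is exactly the standard argument behind \cite[Lemma 3.4]{Rou}, which the paper simply cites rather than reproving: push the levels $\langle G \rangle_k$ forward along the exact functor by induction, then use density together with closure of $\langle F(G) \rangle_n$ under summands to capture every object of $\mathsf{T}$. No gaps; the only bookkeeping point (reducing an object of $\langle \mathsf{S}_1 \ast \mathsf{S}_2\rangle$ to a summand of a single triangle) is handled correctly since coproducts and shifts of triangles are triangles.
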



\subsection{A result on generation time for tilting objects} \label{section: anti-canonical}

Now, we restrict ourselves to a simpler class of generators.  

\begin{definition} \label{definition: tilting object}
 Let $\mathsf{T}$ be a triangulated category.  An object $T$ of $\mathsf{T}$ is called a \textsf{tilting object} if the following two conditions hold:
\begin{enumerate}
\item $\op{Hom}_{\mathsf{T}}(T, T[i]) = 0 \emph{ for all } i \not = 0$;

\item $T$ is a generator for $\mathsf{T}$.
\end{enumerate}
\end{definition}

For these, one can give a more easily computable upper bound on the generation time.
The following is a consequence of \cite[Theorem 3.2]{BF}.

\begin{theorem} \label{theorem: upper bound}
 Let $X$ be a smooth and projective variety. Suppose that $T$ is a tilting object in $\op{D}^b(\op{coh} X)$ and let
 \begin{displaymath}
  m_0(T) := \op{max} \lbrace m \mid \op{Hom}(T,T \otimes_{\mathcal O_X} \omega_X^{\vee}[m]) \not = 0 \rbrace.
 \end{displaymath}
 The generation time of $T$ is bounded above by $\dim X + m_0(T)$. In particular, if there exists a $T$ with $m_0(T) = 0$, then Orlov's Conjecture holds. 
\end{theorem}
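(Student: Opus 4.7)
The plan is to reduce the theorem to \cite[Theorem 3.2]{BF}, which bounds the generation time of a tilting object in a saturated $k$-linear triangulated category with a Serre functor $S$ by the largest integer $N \geq 0$ for which $\op{Hom}(T, S^{-1}T[N]) \neq 0$. The work is then to identify this bound explicitly in our geometric setting.

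First, I would record the structural facts. Since $X$ is smooth and projective, $\op{D}^b(\op{coh} X)$ is saturated (Bondal--Van den Bergh), and by Grothendieck--Serre duality the Serre functor is $S_X = (-) \otimes_{\O_X} \omega_X [\dim X]$, so
\begin{displaymath}
S_X^{-1}T \;=\; T \otimes_{\O_X} \omega_X^{\vee}[-\dim X].
\end{displaymath}
Thus the hypotheses of \cite[Theorem 3.2]{BF} apply to the tilting object $T$, and its conclusion reads: the generation time of $T$ is at most the maximal $N \geq 0$ for which
\begin{displaymath}
\op{Hom}\bigl(T,\, T \otimes_{\O_X} \omega_X^{\vee}[N - \dim X]\bigr) \;\neq\; 0.
\end{displaymath}

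Writing $m = N - \dim X$, the nonvanishing condition becomes $\op{Hom}(T, T \otimes_{\O_X} \omega_X^{\vee}[m]) \neq 0$, whose largest solution $m$ is exactly $m_0(T)$ by definition. Hence the largest admissible $N$ is $\dim X + m_0(T)$, which is the claimed upper bound on the generation time. The ``in particular'' clause is then immediate: if some tilting $T$ achieves $m_0(T) = 0$, the upper bound $\dim X$ combines with Proposition \ref{proposition: lower bound} to force $\op{rdim} X = \dim X$, verifying Conjecture \ref{conjecture: Orlov} for $X$.

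The substantive mathematics is entirely in \cite[Theorem 3.2]{BF}; the main thing to take care with is aligning conventions, in particular the placement of the shift $[\dim X]$ inside $S_X$ versus inside the Ext-vanishing condition, and confirming that the BF hypotheses (tilting generator in a saturated category with a Serre functor) match those available here. Once those bookkeeping matters are settled, the argument is a direct substitution and no further geometric input is needed.
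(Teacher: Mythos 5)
Your proposal is correct and matches the paper, which offers no argument beyond the citation: Theorem~\ref{theorem: upper bound} is stated there as a direct consequence of \cite[Theorem 3.2]{BF}, and your identification of the Serre functor as $(-)\otimes_{\O_X}\omega_X[\dim X]$ together with the shift bookkeeping $N = \dim X + m$ is exactly the translation that citation requires. The ``in particular'' clause via Proposition~\ref{proposition: lower bound} is also handled as the paper intends.
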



\subsection{Toric Frobenius and the anti-nef cone} \label{section:frob}

To search for tilting objects, we follow Bondal \cite{Bondal} and turn
to the toric Frobenius morphism. Let $X$ be a (split) smooth projective 
toric variety of dimension $n$ with fixed torus embedding
$T \hookrightarrow X$ and take $\ell \in \mathbb{N}$.
Define the $\ell^{\text{th}}$ Frobenius map on $T = \gm^n$ to be 
\begin{displaymath}
 (x_1,..., x_n) \mapsto (x_1^{\ell},..., x_n^{\ell}).
\end{displaymath}
This uniquely extends to an endomorphism of $X$ which will be denoted $F_{\ell}$ and called the \textsf{$\ell^{th}$ Frobenius morphism}.
Each sheaf $(F_{\ell})_*(\O_X)$ splits into line bundles.

\begin{definition}
 Let $\op{frob}(X)$ denote the union of all line bundles arising as direct summands of $(F_{\ell})_*(\O_X)$ as $\ell$ varies over $\Z^+$. 
\end{definition}

Thomsen provides an explicit description for $(F_{\ell})_*(\O_X)$ and
shows that the set $\op{frob}(X)$ is finite~\cite[Proposition 6.1]{Thomsen}.
Taking a sufficiently divisible $\ell$, we recover the following:

\begin{proposition} \label{proposition: summands stabilize}
There exists an $\ell$ such that $(F_{\ell})_*(\O_X)$ contains every line
bundle in $\op{frob}(X)$. 
\end{proposition}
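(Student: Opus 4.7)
The strategy is to reduce the statement to a monotonicity claim: if the line bundle $L$ appears as a summand of $(F_\ell)_*\O_X$, then $L$ appears as a summand of $(F_{m\ell})_*\O_X$ for every positive integer $m$. Granted this, the result is immediate from Thomsen's finiteness: enumerate $\op{frob}(X) = \{L_1, \ldots, L_r\}$, pick some $\ell_j$ exhibiting $L_j$ as a summand of $(F_{\ell_j})_*\O_X$, and let $\ell = \operatorname{lcm}(\ell_1, \ldots, \ell_r)$. The monotonicity claim then shows each $L_j$ sits inside $(F_\ell)_*\O_X$ simultaneously.

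To prove the monotonicity claim, the plan is to invoke Thomsen's explicit decomposition. For the smooth toric $X$ with character lattice $M$, the sheaf $(F_\ell)_*\O_X$ splits $T$-equivariantly into rank-one eigensheaves indexed by $u \in M/\ell M$, and each eigensheaf is a line bundle $\O_X(D_{u,\ell})$ whose underlying $T$-invariant divisor is computed cone-by-cone by a round-down formula of the shape $D_{u,\ell}|_{U_\sigma} = -\sum_{\rho \subset \sigma}\lfloor \langle u, v_\rho\rangle/\ell\rfloor D_\rho$, where $v_\rho$ is the primitive generator of the ray $\rho$. The heart of the argument is the trivial identity $\lfloor \langle mu, v_\rho\rangle/(m\ell)\rfloor = \lfloor\langle u, v_\rho\rangle/\ell\rfloor$, which upgrades to the divisor-level equality $D_{mu,m\ell} = D_{u,\ell}$. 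Consequently, if $L = \O_X(D_{u,\ell})$ is a summand of $(F_\ell)_*\O_X$, then the class $mu \in M/m\ell M$ exhibits the \emph{same} line bundle $L$ as a summand of $(F_{m\ell})_*\O_X$, completing the claim.

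The main obstacle I foresee is purely one of bookkeeping: aligning the above with Thomsen's precise conventions, including his indexing of summands, his choice of representative divisor within each class in $\op{Pic}(X)$, and the gluing of the local descriptions on the $U_\sigma$ to a coherent global divisor. The arithmetic identity driving the monotonicity claim is entirely trivial, so once the Thomsen bookkeeping is in place, the proposition follows in one line.
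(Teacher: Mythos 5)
Your proof is correct and is essentially the paper's own argument: the paper offers no proof beyond citing Thomsen's explicit decomposition and the phrase ``taking a sufficiently divisible $\ell$,'' and your monotonicity claim (a summand of $(F_\ell)_*\O_X$ persists in $(F_{m\ell})_*\O_X$ via $\lfloor\langle mu,v_\rho\rangle/(m\ell)\rfloor=\lfloor\langle u,v_\rho\rangle/\ell\rfloor$, then take the lcm over the finitely many members of $\op{frob}(X)$) is exactly the justification that phrase presupposes.
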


Recall that for a normal variety $X$, a Cartier $D$ divisor on $X$ is
\textsf{nef} if $D \cdot C \geq 0$ for every irreducible curve $C
\subset X$. Let $N^1(X)$ be the quotient group of Cartier divisors by
the subgroup of numerically trivial divisors.  The \textsf{nef cone}
$\op{nef}(X)$ is the cone in $N^1(X) \otimes_\Z \R$ given by positive
span of the nef divisors, and the \textsf{anti-nef cone} is the cone
$\op{fen}(X) := -\op{nef}(X) \subset N^1(X) \otimes_\Z \R$.
For smooth projective toric varieties, $\op{Pic}(X) = N^1(X)$.

\begin{definition} 
 We denote the intersection by
 \begin{displaymath}
  \op{bu}(X) := \op{frob}(X) \cap \op{fen}(X) \subset \op{Pic}(X).
 \end{displaymath}
\end{definition}


\subsection{Main result} \label{section:main result}

We can now state the main result. Let 
\begin{displaymath}
 T_{\op{bu}}(X) := \bigoplus_{L \in \op{bu}(X)} L.
\end{displaymath}

\begin{theorem} \label{theorem: main}
 Let $X$ be a smooth projective toric variety. If $T_{\op{bu}}(X)$ is a tilting object and $-K_X$ is nef, then the generation time of $T_{\op{bu}}$ is $\op{dim} X$. In particular, Orlov's Conjecture holds for $X$. 
\end{theorem}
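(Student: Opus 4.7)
The plan is to sandwich the generation time of $T_{\op{bu}}$ between $\dim X$ from below and $\dim X + m_0(T_{\op{bu}})$ from above, and then show $m_0(T_{\op{bu}}) \le 0$. The lower bound is immediate from Proposition~\ref{proposition: lower bound}: $\op{rdim} X \ge \dim X$, so the generation time of any generator, including $T_{\op{bu}}$, is at least $\dim X$. The upper bound uses that $T_{\op{bu}}$ is tilting by hypothesis, so Theorem~\ref{theorem: upper bound} yields generation time $\le \dim X + m_0(T_{\op{bu}})$. Unwinding, one has
\[
  \op{Hom}(T_{\op{bu}}, T_{\op{bu}} \otimes \omega_X^\vee[m]) \;=\; \bigoplus_{L, L' \in \op{bu}(X)} H^m(X, L^{-1} \otimes L' \otimes \omega_X^\vee),
\]
so it suffices to show $H^m(X, L^{-1} \otimes L' \otimes \omega_X^\vee) = 0$ for all $m > 0$ and all $L, L' \in \op{bu}(X)$.

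My approach is to show that $L^{-1} \otimes L' \otimes \omega_X^\vee$ is nef for every such pair, and then invoke Demazure vanishing (nef line bundles on smooth projective toric varieties have no higher cohomology). Since $L \in \op{fen}(X)$, the factor $L^{-1}$ is nef, so it is enough to prove that $L' \otimes \omega_X^\vee$ is nef as well. Here the Bondal--Uehara setup is the key input: Thomsen's description expresses any $L' \in \op{frob}(X)$ as $\mathcal{O}(\sum_\rho a_\rho D_\rho)$ with coefficients $a_\rho \in (-1, 0]$, so that
\[
  L' \otimes \omega_X^\vee \;=\; \mathcal{O}\Bigl(\sum_\rho (1 + a_\rho) D_\rho\Bigr)
\]
has coefficients in $(0, 1]$. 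Testing against any torus-invariant curve $C$ (corresponding to a wall of the fan), one rewrites
\[
  (L' \otimes \omega_X^\vee) \cdot C \;=\; (-K_X) \cdot C + L' \cdot C,
\]
and the assumption that $-K_X$ is nef plus $L' \in \op{fen}(X)$ give a nonnegative term and a nonpositive term respectively.

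The main obstacle is to verify that the nonnegative contribution dominates for every wall. This is where the Thomsen constraint $a_\rho \in (-1, 0]$ is essential: it prevents $\op{bu}(X)$ from containing anti-nef classes that are ``too negative'' to be cancelled by $-K_X$. I expect this to come out of a case analysis on the wall relations of $\Sigma$, using that the intersection numbers $D_\rho \cdot C$ take only very restricted values. Once the nefness is established, Demazure vanishing closes the argument: $m_0(T_{\op{bu}}) \le 0$, so the upper bound from Theorem~\ref{theorem: upper bound} matches the lower bound from Proposition~\ref{proposition: lower bound}, forcing the generation time of $T_{\op{bu}}$ to equal $\dim X$ and verifying Orlov's Conjecture for $X$.
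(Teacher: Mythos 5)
The skeleton is right and matches the paper: the lower bound from Proposition~\ref{proposition: lower bound}, the upper bound from Theorem~\ref{theorem: upper bound}, and the reduction of $m_0(T_{\op{bu}})=0$ to the vanishing of $H^m(X, L^{-1}\otimes L'\otimes\omega_X^\vee)$ for $m>0$ and $L,L'\in\op{bu}(X)$. The gap is in how you propose to obtain that vanishing. You want to show each $L'\otimes\omega_X^\vee$ (hence each $L^{-1}\otimes L'\otimes\omega_X^\vee$) is nef and then apply Demazure vanishing, but this nefness is exactly what you never prove: the sign bookkeeping $(-K_X)\cdot C\ge 0$ and $L'\cdot C\le 0$ says nothing about the sign of their sum, and the Thomsen coefficient bound $1+a_\rho\in(0,1]$ does not force nefness either --- a torus-invariant divisor with coefficients in $[0,1]$ can easily fail to be nef (e.g.\ a single prime invariant divisor of negative self-intersection on a Hirzebruch surface, whose coefficients lie in $\{0,1\}$). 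You flag this yourself as ``the main obstacle,'' so as written the argument is incomplete at its only nontrivial step, and it is not clear the summand-by-summand nefness even holds in general.

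The paper sidesteps this question entirely. Since $T_{\op{bu}}$ is a direct summand of $F_{\ell*}\O_X$ for suitable $\ell$, it suffices to show $\op{Hom}(L, F_{\ell*}\O_X\otimes\omega_X^\vee[m])=0$ for $m>0$ and $L\in\op{bu}(X)$. The projection formula gives $F_{\ell*}\O_X\otimes\omega_X^\vee\cong F_{\ell*}(\omega_X^{-\ell})$, and adjunction for the finite map $F_\ell$ reduces the whole computation to $H^m\bigl(X,(L\otimes\omega_X)^{-\ell}\bigr)$. The single bundle $(L\otimes\omega_X)^{-\ell}=(L^{-1})^{\ell}\otimes(\omega_X^\vee)^{\ell}$ is manifestly nef from the two hypotheses, so Demazure vanishing applies to it, and the vanishing of every summand $H^m(X,L^{-1}\otimes L'\otimes\omega_X^\vee)$ follows for free without any of the individual twists needing to be nef. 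To repair your proof, either establish the nefness claim for the specific classes in $\op{bu}(X)$ (a genuinely new statement requiring a real argument) or, more efficiently, route the computation through the Frobenius as above.
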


\begin{proof}
 We will apply Theorem~\ref{theorem: upper bound} to $T:= T_{\op{bu}}(X)$. Let's compute $\op{Hom}(T,T \otimes \omega_X^{-1}[m])$ to show that $m_0(T) = 0$.
Note that $T$ is a direct summand of $F_{\ell \ast} \O$ for some $\ell$.
Thus, to get the desired vanishing, we first observe that
 \begin{displaymath}
  \op{Hom}(L, F_{\ell \ast} \O \otimes \omega_X^{-1}[m]) \cong \op{Hom}(L, F_{\ell \ast} (\omega_X^{-\ell})[m])
 \end{displaymath}
 using the projection formula. Using adjunction, we have
 \begin{displaymath}
  \op{Hom}(L, F_{\ell \ast} (\omega_X^{-\ell})[m]) \cong \op{Hom}(F_{\ell}^\ast L, \omega_X^{-\ell}[m]) \cong \op{Hom}(L^\ell, \omega_X^{-\ell}[m]) \cong \op{H}^m(X, (L \otimes \omega_X)^{-\ell}).
 \end{displaymath}
 Since $(L \otimes \omega_X)^{-\ell}$ is nef, its higher cohomology vanishes. 
\end{proof}

\begin{remark}
 One sees immediately from \cite[Theorem 2.1]{BF} that this also computes the global dimension of the finite dimensional algebra $A = \op{End}_X(T)$ as $\op{dim} X$. 
\end{remark}

The next result shows that Orlov's Conjecture can propogate, in $K$-negative ways, to other birational models, or chambers in the secondary fan. 
Following the argument of \cite[Proposition 5.2.5, Corollary 5.2.6]{BFK},
we see that:

\begin{proposition} \label{proposition: enough to get nef Fano DM}
 If Orlov's Conjecture holds for a smooth projective nef-Fano toric DM
stack $X$ that is isomorphic in codimension $\geq
1$ to a smooth projective toric DM stack $Y$, then Orlov's Conjecture holds for $Y$.
\end{proposition}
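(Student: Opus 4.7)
The plan is to transport the Rouquier dimension from $X$ to $Y$ through a fully faithful functor produced by VGIT wall-crossing, in the spirit of \cite{BFK}. The lower bound $\op{rdim} Y \geq \dim Y = \dim X$ is automatic from Proposition \ref{proposition: lower bound}, so only the matching upper bound needs to be established; the hypothesis on $X$ gives $\op{rdim} X = \dim X$, and the goal is to show that this propagates to $Y$.

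First I would realize $X$ and $Y$ as stack-theoretic GIT quotients of a common affine space by the Picard torus, associated to two characters in the secondary fan. Because $X$ and $Y$ are isomorphic in codimension $\geq 1$, the two characters lie in chambers connected by a path crossing only finitely many walls; along the way every intermediate chamber gives a smooth projective toric DM stack. This realizes the birational map $X \dashrightarrow Y$ as a composition of VGIT wall-crossings between smooth projective toric DM stacks.

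Second I would apply the window/grade-restriction theorem of \cite{BFK} at each wall. The hypothesis that $-K_X$ is nef is used precisely to pin down the sign of the $\mu$-invariant (the relative canonical weight) at each wall crossed on the path from $Y$ to $X$, so that each wall-crossing produces a fully faithful functor
\[
\op{D}^b(\op{coh} Y') \hookrightarrow \op{D}^b(\op{coh} X')
\]
between the stacks on either side of the wall, with semi-orthogonal complement a finite collection of line bundles. Composing across all walls yields a fully faithful embedding $\op{D}^b(\op{coh} Y) \hookrightarrow \op{D}^b(\op{coh} X)$.

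Third I would pass to the right adjoint. Any fully faithful embedding of triangulated categories whose image admits a semi-orthogonal complement has a right adjoint that is essentially surjective, hence dense, so Lemma \ref{lemma: essentially} gives $\op{rdim} X \geq \op{rdim} Y$. Chaining with $\op{rdim} X = \dim X = \dim Y$ and the lower bound yields $\op{rdim} Y = \dim Y$, which is Orlov's Conjecture for $Y$. The main obstacle is the bookkeeping at the walls: one has to verify that the nef-Fano hypothesis on $X$ controls the sign of $\mu$ at \emph{every} wall crossed along the chosen path (not just at the chamber of $X$), and that all intermediate quotients remain smooth projective toric DM stacks. This is exactly what \cite[Proposition 5.2.5, Corollary 5.2.6]{BFK} sets up, so once the GIT presentation is in place the statement follows by direct appeal to those results.
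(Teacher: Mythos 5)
Your proposal is correct and follows essentially the same route as the paper, which proves this proposition simply by invoking the VGIT wall-crossing argument of \cite[Proposition 5.2.5, Corollary 5.2.6]{BFK}; your write-up just unpacks that citation (fully faithful embeddings across walls with sign of $\mu$ controlled by the nef-Fano hypothesis, then the dense right adjoint combined with Lemma~\ref{lemma: essentially} and Proposition~\ref{proposition: lower bound}).
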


Thus we have the following:

\begin{corollary} \label{corollary: more cases}
 Let $X$ be a smooth projective variety isomorphic in codimension $\geq
1$ to a smooth projective toric nef-Fano $Y$ with $T_{\op{bu}}(Y)$ a tilting object. Then Orlov's Conjecture holds for $X$. 
\end{corollary}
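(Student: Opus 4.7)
The plan is to string together Theorem~\ref{theorem: main} and Proposition~\ref{proposition: enough to get nef Fano DM}. First I would apply Theorem~\ref{theorem: main} to $Y$ itself: $Y$ is a smooth projective toric variety, $-K_Y$ is nef by the nef-Fano hypothesis, and $T_{\op{bu}}(Y)$ is a tilting object by assumption, so the theorem yields that the generation time of $T_{\op{bu}}(Y)$ equals $\dim Y$. In particular, Orlov's Conjecture holds for $Y$.

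Next I would invoke Proposition~\ref{proposition: enough to get nef Fano DM}, taking its nef-Fano toric input to be our $Y$ and its target to be our $X$. Since $X$ and $Y$ are isomorphic in codimension $\geq 1$ by hypothesis, and since $\dim X = \dim Y$ on any such open locus, the proposition propagates Orlov's Conjecture from $Y$ to $X$, which is the desired conclusion.

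The only step that is not a purely formal appeal to previously stated results is the implicit verification that the target $X$ is itself a smooth projective toric DM stack, as the proposition requires. This is the main (mild) obstacle: since $X$ is isomorphic to the toric variety $Y$ outside a small closed subset on each side, one may identify the dense torus $T \subset Y$ with an open subscheme of $X$, and then extend the $T$-action across the small boundary using normality of $X$. This exhibits $X$ as a smooth projective toric variety, after which Theorem~\ref{theorem: main} and Proposition~\ref{proposition: enough to get nef Fano DM} combine immediately to complete the proof.
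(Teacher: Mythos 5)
Your proof is correct and is essentially the paper's own (implicit) argument: the corollary is presented as the immediate concatenation of Theorem~\ref{theorem: main} applied to $Y$ (which is nef-Fano with $T_{\op{bu}}(Y)$ tilting) and Proposition~\ref{proposition: enough to get nef Fano DM} to transfer the conclusion to $X$. Your extra observation that one must check $X$ qualifies as a toric variety/DM stack to invoke the proposition is a point the paper leaves implicit, and your justification for it is reasonable.
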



\subsection{Examples} \label{section: examples}

Despite being fairly innocous, we can leverage the results of Section~\ref{section:main result} into a healthy increase of positive examples of Orlov's Conjecture. 

\begin{proposition} \label{proposition: toric fano 3folds}
 Orlov's Conjecture holds for all smooth Fano toric threefolds. 
\end{proposition}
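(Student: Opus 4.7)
The plan is to reduce to a case-by-case verification using the classification of smooth toric Fano threefolds. By the work of Batyrev, such threefolds form an explicit finite list of 18 deformation classes (up to isomorphism, 18 varieties), so it suffices to establish the hypotheses of Theorem~\ref{theorem: main} on each one. Since smooth Fano varieties have ample (hence nef) anticanonical bundle, the condition ``$-K_X$ is nef'' is automatic. Thus the entire burden of the argument is to show that $T_{\op{bu}}(X)$ is a tilting object for each $X$ in the list.

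First I would recall the Thomsen-style algorithm for computing $\op{frob}(X)$: each line bundle summand of $(F_\ell)_\ast \O_X$ is pinned down combinatorially by its toric divisor, and one can enumerate the finite set $\op{frob}(X)$ directly from the fan of $X$ together with a sufficiently divisible $\ell$ as guaranteed by Proposition~\ref{proposition: summands stabilize}. Then I would intersect with $\op{fen}(X)$, which is straightforward from the description of the nef cone in terms of the fan, to obtain the explicit finite list $\op{bu}(X) \subset \op{Pic}(X)$ in each case.

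Next I would invoke the verifications already carried out by Bondal and Uehara for small-dimensional toric Fanos: Uehara has checked that the Bondal-Uehara collection $\op{bu}(X)$ produces a full strong exceptional collection of line bundles on every smooth toric Fano threefold, which is exactly the statement that $T_{\op{bu}}(X)$ is a tilting object in $\op{D}^b(\op{coh} X)$. With this in hand, Theorem~\ref{theorem: main} applies directly and yields $\op{rdim} X = \op{dim} X = 3$ for every such $X$.

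The main obstacle is the combinatorial verification underlying the tilting property: one must check, for each of the 18 varieties, both the vanishing $\op{Ext}^i(L, L') = 0$ for $i > 0$ and all $L, L' \in \op{bu}(X)$ (using Bott-style cohomology vanishing on toric varieties) and the generation statement (typically via a Koszul or Beilinson-type resolution of the diagonal built from $\op{bu}(X)$). Absent the appeal to Uehara's computation, this would be the labor-intensive step, but since these calculations already appear in the literature, the proof reduces to citing them and confirming that the collection used there is precisely $\op{bu}(X)$ in our notation.
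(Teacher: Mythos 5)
Your proposal is correct and follows essentially the same route as the paper: observe that $-K_X$ is ample (hence nef) for Fano $X$, invoke Uehara's verification that $T_{\op{bu}}(X)$ is a tilting bundle for every smooth toric Fano threefold, and apply Theorem~\ref{theorem: main}. The additional detail you supply about the classification into $18$ varieties and the Thomsen algorithm is background to Uehara's computation rather than a different argument.
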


\begin{proof}
 One can now apply Theorem~\ref{theorem: main} thanks to \cite{Uehara} which guarantees that $T_{\op{bu}}(Y)$ is tilting for $Y$ a smooth toric Fano variety of dimension at most $3$. 
\end{proof}

In \cite{Prabhu}, the Bondal-Uehara method is used to exhibit exceptional collections for a subset of smooth toric Fano fourfolds but, in fact, $m_0(T) = 0$ for all $T$ produced by Prabhu-Naik.

\begin{proposition} \label{proposition: toric fano 4folds}
 Orlov's Conjecture holds for all smooth toric Fano fourfolds. 
\end{proposition}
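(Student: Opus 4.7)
The plan is to apply Theorem~\ref{theorem: main} to every smooth projective toric Fano fourfold $X$. Since Fano means $-K_X$ is ample and hence nef, the nef-Fano hypothesis of that theorem is automatic, so the only thing to check is that $T_{\op{bu}}(X)$ is a tilting object on $X$. Once this is established, Theorem~\ref{theorem: main} yields that the generation time of $T_{\op{bu}}(X)$ equals $\op{dim} X = 4$, and combined with the lower bound of Proposition~\ref{proposition: lower bound} this gives $\op{rdim} X = 4$, confirming Orlov's Conjecture.

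First I would invoke the classification of smooth toric Fano fourfolds (Batyrev, completed by Sato), which reduces the problem to a finite list of $124$ isomorphism classes. For the subset already addressed by Prabhu-Naik, $T_{\op{bu}}(X)$ has been shown to be tilting, and, as noted in the paragraph preceding the statement, $m_0(T) = 0$ in each such case; these members of the list are therefore handled immediately by Theorem~\ref{theorem: main}.

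For each remaining fourfold my plan is to extend the Bondal-Uehara computation directly. The input is Thomsen's explicit formula for $(F_\ell)_*\O_X$, which gives an algorithm for enumerating $\op{frob}(X)$; intersecting with the anti-nef cone $\op{fen}(X)$ then produces $\op{bu}(X)$, using Proposition~\ref{proposition: summands stabilize} to know that one can stop at a sufficiently divisible $\ell$. The two defining conditions of a tilting object must then be verified: generation of $\op{D}^b(\op{coh} X)$, which is in hand once $\op{bu}(X)$ supports a full strong exceptional collection, and the vanishing $\op{Hom}(L,L'[i]) = 0$ for $i \neq 0$ and $L,L' \in \op{bu}(X)$, which reduces via the projection formula and adjunction (as in the proof of Theorem~\ref{theorem: main}) to cohomology vanishing for explicit line bundles on a nef-Fano toric variety. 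When the direct verification is cumbersome for a particular $X$, an alternative route is to apply Corollary~\ref{corollary: more cases}: exhibit a Prabhu-Naik fourfold $Y$ that is isomorphic to $X$ in codimension $\geq 1$ via a wall-crossing in the secondary fan, and transport the conclusion.

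The main obstacle is the combinatorial bookkeeping of verifying the Ext-vanishing and the exceptional-collection structure across all $124$ entries in the classification, which is a finite but substantial case analysis. It is well suited to computer algebra, but to make the argument rigorous each case must be recorded and checked; once that is done, the proof collapses to a single invocation of Theorem~\ref{theorem: main}.
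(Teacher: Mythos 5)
There is a genuine gap. Your entire argument routes through Theorem~\ref{theorem: main}, which requires $T_{\op{bu}}(X)$ to be a tilting object, and you propose to verify this for each of the smooth toric Fano fourfolds not treated by Prabhu-Naik by ``extending the Bondal-Uehara computation directly.'' But the paper is explicit that the Bondal-Uehara method only produces exceptional collections for a \emph{subset} of the classification: for the remaining fourfolds the Frobenius pushforward does not yield a tilting bundle (this is precisely why Prabhu-Naik had to supplement the method with other constructions). So the case analysis you defer to the end is not merely ``cumbersome'' --- for some of the 124 fourfolds it would terminate in failure, and your fallback of transporting the result via Corollary~\ref{corollary: more cases} from a Prabhu-Naik fourfold isomorphic in codimension $\geq 1$ is asserted without any evidence that such a model exists in each problematic case. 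As written, the proposal is a plan whose essential step is both unexecuted and, in the stated form, unachievable.

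The correct route, and the one the paper takes, bypasses $T_{\op{bu}}$ entirely: Prabhu-Naik's Theorem 7.8 already constructs a tilting bundle $T$ on \emph{every} smooth toric Fano fourfold (using Frobenius pushforwards where they work and other methods where they do not), and these tilting bundles arise from pullback exceptional collections in the sense of \cite[Section 3.2]{BF}, a condition which entails $m_0(T) = 0$. One then applies Theorem~\ref{theorem: upper bound} directly --- not Theorem~\ref{theorem: main} --- to get generation time at most $4$, and Proposition~\ref{proposition: lower bound} for the matching lower bound. The lesson is that Theorem~\ref{theorem: upper bound} accepts any tilting object with $m_0 = 0$, whereas Theorem~\ref{theorem: main} is tied to the specific generator $T_{\op{bu}}(X)$, which need not exist as a tilting object even for Fano toric varieties.
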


\begin{proof}
 This follows immediately from \cite[Theorem 7.8]{Prabhu} since being a pullback exceptional collection, in the language of \cite[Section 3.2]{BF}, includes the vanishing of $m_0(T)$.  
\end{proof}

\begin{remark}
 We can use Corollary~\ref{corollary: more cases} to expand the list of toric varieties satisfying Orlov's Conjecture to all those coming from $K$-negative birational maps starting from Propositions~\ref{proposition: toric fano 3folds} and Proposition~\ref{proposition: toric fano 4folds}.
\end{remark}

These results hold for all the corresponding arithmetic toric
varieties as well.

\begin{lemma}
 Let $X$ be a smooth, quasi-projective variety over $k$ and $L/k$ a Galois extension. Then Orlov's Conjecture holds for $X$ if and only if it holds for $X_L$.
\end{lemma}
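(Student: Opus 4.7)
The plan is to prove the stronger statement that $\op{rdim} X = \op{rdim} X_L$; since $\op{dim} X = \op{dim} X_L$ for any field extension, Orlov's Conjecture for either side then follows from the other via the Rouquier lower bound of Proposition~\ref{proposition: lower bound}. The mechanism is the base-change morphism $\pi : X_L \to X$, and in the Galois case both adjunction maps $\op{id} \to \pi_* \pi^*$ and $\pi^* \pi_* \to \op{id}$ admit splittings.

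I first treat $L/k$ finite Galois with group $G$. Then $\pi$ is finite étale and both $\pi^*, \pi_*$ are exact between the bounded coherent derived categories. Two splittings drive the argument. First, $k \hookrightarrow L$ admits a $k$-linear retraction (extend $1$ to a $k$-basis of $L$), so $\mathcal{O}_X$ is a direct summand of $\pi_* \mathcal{O}_{X_L}$ as $\mathcal{O}_X$-modules; by the projection formula, every $\mathcal{F} \in \op{D}^b(\op{coh} X)$ is a summand of $\pi_* \pi^* \mathcal{F}$. Second, flat base change along the étale decomposition $X_L \times_X X_L \cong \bigsqcup_{g \in G} X_L$ yields $\pi^* \pi_* \mathcal{H} \cong \bigoplus_{g \in G} g^* \mathcal{H}$, so every $\mathcal{H} \in \op{D}^b(\op{coh} X_L)$ is a summand of $\pi^* \pi_* \mathcal{H}$. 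Since $\pi^*$ and $\pi_*$ are exact and $\langle - \rangle_n$ is closed under cones, shifts, coproducts, and summands, a time-$n$ generator on either side transports via $\pi^*$ or $\pi_*$ to a time-$\leq n$ generator on the other. This establishes $\op{rdim} X = \op{rdim} X_L$ in the finite case.

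For arbitrary Galois $L/k$, write $L$ as the filtered colimit of its finite Galois subextensions $L_0/k$; every object and every finite diagram in $\op{D}^b(\op{coh} X_L)$ descends to some $\op{D}^b(\op{coh} X_{L_0})$. To bound $\op{rdim} X_L \leq \op{rdim} X$, pull back a time-$n$ generator from $X$, descend any $\mathcal{H}$ on $X_L$ to some $X_{L_0}$, apply the finite case to $L_0/k$, then pull back. For $\op{rdim} X \leq \op{rdim} X_L$, descend a time-$n$ generator to $G_0$ on some $X_{L_0}$; for each $\mathcal{H}_0 \in \op{D}^b(\op{coh} X_{L_0})$, descend the witness $(\mathcal{H}_0)_L \in \langle G \rangle_n$ to some finite Galois $L_1 \supseteq L_0$, then push down along $X_{L_1} \to X_{L_0}$ using the projection formula identification $\pi_* \mathcal{O}_{X_{L_1}} \cong \mathcal{O}_{X_{L_0}}^{\oplus [L_1:L_0]}$ to place $\mathcal{H}_0^{\oplus [L_1:L_0]}$ into $\langle G_0^{\oplus [L_1:L_0]} \rangle_n = \langle G_0 \rangle_n$, and recover $\mathcal{H}_0$ as a summand.

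The main subtlety is the bookkeeping for this descent: verifying that the finitely many objects, morphisms, and triangles witnessing membership in $\langle - \rangle_n$ all descend to a single finite Galois subextension. This is standard for noetherian schemes and finitely presented coherent sheaves over filtered colimits of rings, but must be invoked carefully to ensure the required diagrams land simultaneously over a common $L_0$ (or $L_1$).
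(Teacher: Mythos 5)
Your argument is correct, and it proves something slightly stronger than the paper does, by a partly different route. The paper treats the two directions asymmetrically: for the forward implication it simply cites \cite[Proposition 5.4]{Sosna}, and for the converse it observes that $\pi_\ast$ is dense (since $\pi_\ast\pi^\ast E \cong E^{\oplus|G|}$) and then invokes Lemma~\ref{lemma: essentially} together with the lower bound of Proposition~\ref{proposition: lower bound}. You instead establish the cleaner identity $\op{rdim} X = \op{rdim} X_L$ by exhibiting both splittings --- $\mathcal{F}$ a summand of $\pi_\ast\pi^\ast\mathcal{F}$ via the projection formula, and $\mathcal{H}$ a summand of $\pi^\ast\pi_\ast\mathcal{H}$ via $X_L\times_X X_L \cong \bigsqcup_{g\in G} X_L$ --- and transporting generators through the exact functors $\pi^\ast$ and $\pi_\ast$; this is exactly the mechanism behind Lemma~\ref{lemma: essentially}, applied in both directions, and it makes the appeal to Sosna unnecessary. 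You also treat infinite Galois extensions honestly via a limit argument, which the paper's proof (written with $|G|$ and an implicitly coherent $\pi_\ast\pi^\ast E$) tacitly assumes away; in the applications only finite splitting fields occur, so nothing is lost there, but your version covers the statement as literally written, at the cost of the descent bookkeeping you flag, which is standard for noetherian schemes over filtered colimits of fields. Two small points: $\pi_\ast\mathcal{O}_{X_L} \cong \mathcal{O}_X^{\oplus [L:k]}$ outright, so no retraction of $k \hookrightarrow L$ is needed; and $\langle-\rangle_n$ is not closed under cones (that is what increments $n$) --- the fact your transport step actually uses is that an exact functor carries $\langle S\rangle_n$ into $\langle F(S)\rangle_n$, which is \cite[Lemma 3.4]{Rou}.
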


\begin{proof}
 If it holds for $X$, then \cite[Proposition 5.4]{Sosna} says it holds for $X_L$. Conversely, the projection $\pi: X_L \to X$ is a dense functor as $\pi_\ast \pi^\ast E \cong E^{\oplus |G|}$ for $G = \op{Gal}(L/k)$. So this follows from Lemma~\ref{lemma: essentially} and Proposition~\ref{proposition: lower bound}.
\end{proof}


\end{document}